\documentclass[11pt]{amsart}

\newtheorem{theorem}{Theorem}[section]
\newtheorem{thrm}{Theorem}[section]
\newtheorem{lmm}[theorem]{Lemma}
\newtheorem{crllr}[theorem]{Corollary}
\newtheorem{prpstn}[theorem]{Proposition}

\theoremstyle{definition}

\theoremstyle{remark}

\newcommand{\R}{{\mathbb R}}

\newcommand{\cC}{\mathcal C}

\begin{document}
\title[Speed of Spread for Fractional Diffusion Equations]
{On the Speed of Spread for Fractional Reaction-Diffusion Equations}

\author{Hans Engler}
\address{Dept. of Mathematics, Georgetown University\\
Box 571233\\ Washington, DC 20057\\
USA}
\email{engler@georgetown.edu}

\begin{abstract}
The fractional reaction diffusion equation $\partial_tu + Au = g(u)$ is discussed, where $A$ is a fractional differential operator on $\R$ of order $\alpha \in (0,2)$, the $C^1$ function $g$ vanishes at $\zeta = 0$ and $\zeta = 1$ and either $g \ge 0$ on $(0,1)$ or $g < 0$ near $\zeta = 0$. In the case of non-negative $g$, it is shown that solutions with initial support on the positive half axis spread into the left half axis with unbounded speed if  $g(\zeta)$ satisfies some weak growth condition near $\zeta = 0$ in the case $\alpha > 1$, or if $g$ is  merely positive on a sufficiently large interval near $\zeta = 1$ in the case $\alpha < 1$. On the other hand, it shown that solutions spread with finite speed if $g'(0) < 0$. The  proofs use comparison arguments and a new family of travelling wave solutions for this class of problems. 

\end{abstract}

\maketitle
\noindent

{\footnotesize }

\subjclass{}

\date{July 2009}

\section{Introduction}

The scalar reaction-diffusion equation
\begin{equation} \partial_t u(x,t) - \partial_x^2 u(x,t) = g(u(x,t)) \label{rde2}
\end{equation}
has been the subject of long study, beginning with the celebrated paper \cite{kpp}. The authors of \cite{kpp} proposed this equation, with $g$ positive and concave on $(0,1)$ such that $g(0) = g(1) = 0$, as a model for a population that undergoes logistic growth and Brownian diffusion. If $u(x,0) = H(x)$, the Heaviside function, and $g(u) = u -u^2$, the equation in fact has an exact probabilistic interpretation, see \cite{mckean}. For this problem, is known that solutions approach a wave profile $\psi$ in the sense that
\[u(x+m(t),t) \to \psi(x) \quad (t \to \infty)
\]
where $m(t)$ is the median, $u(m(t),t) = \frac12$. It turns out that $m(t) = c^*t + O(\log t)$ for a suitable asymptotic finite wave speed $c^*$. Larger asymptotic speeds are only possible if the initial data are supported on $\R$. A more general result, given in \cite{aronwein}, implies that there is a critical speed $c^*$ such that for fairly general initial data $u(\cdot,0)$ that are non-negative and supported on $(0,\infty)$,
\begin{equation} \lim_{t \to \infty} \limsup_{x<-ct} u(x,t) =0  \label{est_0}
\end{equation}
whenever $c>c^*$ and
\begin{equation} \lim_{t \to \infty} \liminf_{x>-ct} u(x,t) =1  \label{est_1}
\end{equation}
whenever $c<c^*$.  If $u$ denotes a quantity that is to be avoided and whose spread is governed by \eqref{rde2}, a runner may escape from it by running to $-\infty$ at a speed $c>c^*$, but this quantity will catch up with and engulf her if her speed is $c<c^*$.

\smallskip
Equation \eqref{rde2} was also derived in \cite{allencahn} to describe antiphase domain coarsening in alloys. In this situation, $g(0)=g(1)=g(u^*) = 0$ for some $u^* \in (0,1)$, and $g<0$ on $(0,u^*)$, $g>0$ on $(u^*,1)$. In this case there exists exactly one wave speed $c^*$ with associated wave profile. In particular, \eqref{est_0} still holds for this $c^*$. The first of the two cases (the KPP case) corresponds to "pulled" fronts (the state $u=0$ is unstable) while the second case (the Allen-Cahn case) results in a "pushed" front (the state $u=0$ is stable). More on these two fundamentally different situations may be found in \cite{ebert1} and the references given there. A vast range of applications leading to related models is discussed in \cite{fortpujol}.

\smallskip
The purpose of this note is a study of the fractional reaction-diffusion equation
\begin{equation} \partial_t u(x,t) + A u(x,t) = g(u(x,t)) \, . \label{rde_alpha}
\end{equation}
Here $A$ is a pseudo-differential operator with symbol $p$ that is homogeneous of degree $\alpha \in (0,2]$, such that $p(-\lambda) = \overline{p(\lambda)}$ and $|p(1)| =1$. Then we can write $p$ in the form
\begin{equation}
p(\lambda) =   e^{- i \frac{\pi}{2} sign(\lambda)\rho}|\lambda|^\alpha
\label{symbol1}
\end{equation}
where $\rho \in \R$. There will be additional restrictions on the parameter $\rho$ in section 2. For $ \rho = 0$ we obtain fractional powers of the usual negative one-dimensional Laplacian, abbreviated often by $(-\Delta)^{\alpha/2}$. There are various real variable representations of such operators, e.g. as a singular integral operator or as a limit of suitable difference operators; see \cite{bkm2} where this is explained in more detail. The function $g$ is always assumed to satisfy $g(0) = g(1) = 0$. We are interested in both the KPP-case, i.e. $g(\zeta) \ge 0$ for $0 < \zeta < 1$, and the Allen-Cahn case, i.e. $g(\zeta) < 0$ for $\zeta$ near  $0$ and $g(\zeta) > 0$ for $\zeta$ near $1$.

\smallskip

This class of equations has recenty been proposed as a model for reaction and anomalous diffusion; see \cite{bkm2,castil,mvv1, mvv2, zanette,zas}. It should be noted that the term "anomalous diffusion" is also used for situations in which the first order time derivative is replaced by a fractional order derivative. Another generalization of \eqref{rde2} consists in allowing time delays; see \cite{schaaf}. These further generalizations will not be discussed here.

\medskip

There is strong evidence that the equation \eqref{rde_alpha} does not admit traveling wave solutions if $0 < \alpha < 2$ and $g$ is positive and concave on $(0,1)$. Rather, numerical results in \cite{castil} and \cite{bkm3} suggest that for initial data that are supported on the positive half axis and increase there from 0 to 1, the median satisfies $m(t) \sim -e^{ct}$ for some $c>0$. In \cite{cabre1}, the estimates
\begin{equation} \lim_{t \to \infty} \limsup_{x<-e^{ct}} u(x,t) =0,  \quad \lim_{t \to \infty} \liminf_{x>-e^{dt}} u(x,t) =1 \label{est_exp}
\end{equation}
are shown to hold for such initial data whenever $c>c^*>d$, where $c^* = g'(0)/\alpha$. Thus the asymptotic speed of spread grows exponentially. For the case where $g<0$ on some interval $(0,u^*)$, the results in \cite{golovin} and \cite{zanette} suggest on the other hand that there exist wave profile solutions that move with constant speed, although no rigorous proofs are given there.

\medskip
The main results of this note take the form \eqref{est_0} and \eqref{est_1}. It is shown that for a large class of right hand sides $g$ that are non-negative on $(0,1)$, the estimate \eqref{est_1} holds for \emph{all} speeds $c$; that is, the speed of spread is unbounded. It is not necessary to assume that $g'(0) > 0$, and if $\alpha < 1$, $g$ may even be zero near $0$. On the other hand, if $g'(0) < 0$, then it is shown that \eqref{est_0} holds for some finite $c$, that is, there is always a bound on the speed of spread. These results are stated and proven in section 3. This is done by employing comparison arguments together with a set of travelling wave solutions that is constructed in section 2 and that may be of independent interest. Some basic existence and comparison results for \eqref{rde_alpha} are sketched in section 4.

\section{A Class of Travelling Wave Solutions}

In this section, it will be shown that cumulative distribution functions of stable probability distributions with parameters $\alpha, \, \beta$ lead to traveling wave solutions $u(x,t) = U(x+ct)$  of \eqref{rde_alpha}, for suitable functions $g$. A two parameter family will be constructed for each possible choice of $\alpha$ and $\rho$, one parameter being the speed $c$. The main contribution of this section is the characterization of the nonlinear function $g$ that is required to make the equation hold.

\medskip
Consider the probability density function $f_{\alpha \beta}$ of a stable probability distribution with index of stability $\alpha \in (0,2)$, skewness parameter $\beta \in (-1,1)$, scale parameter $\gamma =1$, and location parameter $\delta = 0$, where Zolotarev's parametrization (B) (see \cite{zolo}) is used for $\alpha \ne 1$ and form (C) is used if $\alpha = 1$. The characteristic function (Fourier transform) of $f_{\alpha\beta}$ then is
\begin{equation} \label{char_fun}
\lambda \mapsto e^{-|\lambda|^\alpha \omega(\lambda)}\, ,
\end{equation}
where
\[\omega(\lambda) = \begin{cases} e^{-i \frac{\pi}{2}sign(\lambda) \beta (\alpha - 1 + sign(1-\alpha))}   \quad (\alpha \ne 1) \\
e^{-i \frac{\pi}{2}sign(\lambda) \beta } \quad (\alpha = 1)
\end{cases} \, .
\]

\smallskip
The corresponding cumulative distribution function is denoted by $F_{\alpha \beta}$. It is known that $f_{\alpha \beta}$ is positive, infinitely differentiable, and unimodal. Also,  as $x \to \infty$, there are asymptotic representations
\begin{eqnarray}
1-F_{\alpha \beta}(x) &\sim& \sum_{j\ge 1}c_{j\alpha\beta} x^{-j\alpha}
\\
f_{\alpha \beta}(x) &\sim& \sum_{j\ge 1} \alpha c_{j\alpha\beta} x^{-1-j\alpha}
\label{asymp+}
\end{eqnarray}
and as $x \to - \infty$
\begin{eqnarray}
F_{\alpha \beta}(x) &\sim& \sum_{j\ge 1} d_{j\alpha\beta} (-x)^{-j\alpha}\\
f_{\alpha \beta}(x) &\sim& \sum_{j\ge 1}\alpha d_{j\alpha\beta} (-x)^{-1-j\alpha}
\label{asymp-}
\end{eqnarray}
Consider now the "free" equation
\begin{equation} \partial_t u(x,t) + A u(x,t) = 0
 \label{rde_0}
\end{equation}
where the symbol of $A$ is given by \eqref{symbol1}, with $\rho$ given by 
\begin{equation}
\label{rho}
\rho = \begin{cases}\beta (\alpha - 1 + sign(1-\alpha))   \quad (\alpha \ne 1) \\
\beta \quad  (\alpha = 1)\, .
\end{cases}
\end{equation}
\smallskip
\emph{Throughout the rest of the paper, we shall only consider operators $A$ with symbol $p$ in \eqref{symbol1} for which $\rho$ is of the form \eqref{rho} for some $\beta \in (-1,1)$. The relation \eqref{rho} between $\beta$ and $\rho$ will always be assumed.}
Taking the Fourier transform then shows that equation \eqref{rde_0} has the fundamental solution
\[(x,t) \mapsto W(x,t) = t^{-1/\alpha}f_{\alpha \beta}(xt^{-1/\alpha})
\]
with initial data $W(x,0) = \delta_0(x)$, the delta distribution. In particular, $W(x,1) = f_{\alpha\beta}(x)$. There is also the special solution
\[(x,t) \mapsto V(x,t) = F_{\alpha \beta}(xt^{-1/\alpha})
\]
with initial data $V(x,0) = H(x)$, the Heaviside function. The equations hold in the sense of distributions, and the initial data are attained in this sense.

\medskip
From now on,  let $\alpha, \, \beta $ be fixed. For fixed $c \in \R$ and $\tau > 0$ we consider the function
\begin{equation}
U_{\tau}(\xi) = F_{\alpha \beta}\left(\xi\tau^{-1/\alpha} \right) \, . \label{wave_def}
\end{equation}
Set $u_{c\tau}(x,t) = U_{\tau}(x + ct)$, then
\begin{eqnarray*}
A u_{c\tau}(x,t) &=&
- \partial_\tau F_{\alpha \beta}\left((x+ct)\tau^{-1/\alpha} \right) \\
&=&
\frac{1}{\alpha} \left((x+ct)\tau^{-1/\alpha-1} \right) f_{\alpha \beta}\left((x+ct)\tau^{-1/\alpha} \right)
\end{eqnarray*}
and
\[
\partial_t u_{c\tau}(x,t) =
c \tau^{-1/\alpha}f_{\alpha \beta}\left((x+ct)\tau^{-1/\alpha} \right)
\]
and therefore 
\begin{eqnarray*}
\partial_t u_{c \tau}(x,t) + A u_{c \tau} (x,t) &=&
\left(\frac{1}{\alpha} \left((x+ct)\tau^{-1/\alpha-1} \right) + c \tau^{-1/\alpha} \right) \times \dots \\
&\quad& \dots \times f_{\alpha \beta}\left((x+ct)\tau^{-1/\alpha} \right)
\end{eqnarray*}
Now $(x+ct)\tau^{-1/\alpha} = F_{\alpha \beta}^{-1}(u_{c \tau}(x,t))$ and consequently
\begin{equation} \label{wave_eq}
\partial_t u_{c \tau}(x,t) + A u_{c \tau} (x,t) = c \tau^{-1/\alpha} g_0(u_{c \tau}(x,t)) +
\frac{1}{\alpha\tau} g_1(u_{c \tau}(x,t))
\end{equation}
with
\begin{equation} \label{g_def}
g_0(\zeta) =   f_{\alpha \beta}(F_{\alpha \beta}^{-1}(\zeta)), \quad
g_1(\zeta) = F_{\alpha \beta}^{-1}(\zeta) f_{\alpha \beta}(F_{\alpha \beta}^{-1}(\zeta)) \, .
\end{equation}
Equation \eqref{wave_eq} is of the form \eqref{rde_alpha}, with $g(\zeta) =  c \tau^{-1/\alpha} g_0(\zeta) + \frac{1}{\alpha\tau} g_1(\zeta)$.

\smallskip
In the case $\alpha = 1$ and $-1 < \beta < 1$, everything is explicit.  Let $\kappa = \cos \frac{\pi \beta}{2}$ and $\sigma = \sin \frac{\pi \beta}{2}$. Then by results in \cite{zolo},
\begin{eqnarray*}
F(x) &=& \frac{1}{2} + \frac{1}{\pi} \arctan \frac{x - \sigma}{\kappa} \\
F^{-1}(\zeta) &=& \sigma -\kappa \cot (\pi \zeta) \\
f(x) &=& \frac{1}{\pi \kappa (1+(x-\sigma)^2/\kappa^2)} \\
g_0(\zeta) &=& \frac{1}{\kappa \pi} \sin^2(\pi \zeta) \\
g_1(\zeta) &=& \frac{\sigma}{\pi\kappa} \sin^2 (\pi \zeta) -\frac{1}{\pi} \cos (\pi \zeta) \sin (\pi \zeta) 
\end{eqnarray*}

It remains to characterize the functions $g_0, \, g_1$ in the general case.

\begin{prpstn} \label{prop2} Let $0 < \alpha < 2, \, -1 < \beta < 1$. The functions $g_0, \, g_1$ have the following properties.

a) $g_0$ and $g_1$ are infinitely differentiable on $(0,1)$.

b) The function $g_0$ is positive on $(0,1)$. The function $g_1$ is negative on $(0,F_{\alpha\beta}(0))$ and positive on $(F_{\alpha\beta}(0),1)$. The function  
\[\zeta \mapsto c \tau^{-1/\alpha} g_0(\zeta) + \frac{1}{\alpha\tau} g_1(\zeta)
\]
is negative on $(0,u^*)$ and positive on $(u^*,1)$, where $u^* = F_{\alpha \beta}(-c \alpha \tau^{-1/\alpha + 1})$.

c) As $\zeta \downarrow 0$, $g_0(\zeta) = O(\zeta^{1 + 1/\alpha})$ and  $g_0(1-\zeta) = O(\zeta^{1 + 1/\alpha})$.

d) As $\zeta \downarrow 0$, $g_1(\zeta) = - \alpha \zeta + O(\zeta^{1 + 1/\alpha})$. As $\zeta \uparrow 1$,  $g_1(\zeta) = \alpha (1-\zeta) + O((1-\zeta)^{1 + 1/\alpha})$. 

e) The functions $g_0$ and $g_1$ can be represented as
\begin{eqnarray*}
g_0(\zeta) &=& \frac{d}{d \zeta} \int_{-\infty}^{F_{\alpha \beta}^{-1}(\zeta)} f_{\alpha \beta}^2(s) ds \\
g_1(\zeta) &=& \frac{d}{d \zeta} \int_{-\infty}^{F_{\alpha \beta}^{-1}(\zeta)} sf_{\alpha \beta}^2(s) ds
 \, .
\end{eqnarray*}

\end{prpstn}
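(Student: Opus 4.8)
The plan is to read the whole proposition off from the single smooth, strictly positive density $f_{\alpha\beta}$ and its primitive $F_{\alpha\beta}$ via the substitution $x=F_{\alpha\beta}^{-1}(\zeta)$. Since $f_{\alpha\beta}$ is $C^\infty$ and positive on all of $\R$, the distribution function $F_{\alpha\beta}$ is a $C^\infty$ increasing bijection of $\R$ onto $(0,1)$, so $F_{\alpha\beta}^{-1}\colon(0,1)\to\R$ is $C^\infty$ with $(F_{\alpha\beta}^{-1})'(\zeta)=1/f_{\alpha\beta}(F_{\alpha\beta}^{-1}(\zeta))$. This already yields (a) from \eqref{g_def}, since $g_0=f_{\alpha\beta}\circ F_{\alpha\beta}^{-1}$ is a composition of $C^\infty$ maps and $g_1(\zeta)=F_{\alpha\beta}^{-1}(\zeta)\,g_0(\zeta)$ a product of $C^\infty$ functions. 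I will keep the abbreviation $x=F_{\alpha\beta}^{-1}(\zeta)$; then $\zeta\downarrow0$ corresponds to $x\to-\infty$, $\zeta\uparrow1$ to $x\to+\infty$, and $\zeta=F_{\alpha\beta}(0)$ to $x=0$.

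For (b), $g_0(\zeta)=f_{\alpha\beta}(x)>0$ for every $\zeta\in(0,1)$ because $f_{\alpha\beta}>0$, and $g_1(\zeta)=x\,f_{\alpha\beta}(x)$ has the sign of $x$, which is negative for $\zeta<F_{\alpha\beta}(0)$ and positive for $\zeta>F_{\alpha\beta}(0)$. For the combined nonlinearity I would factor out the positive quantity $g_0(\zeta)$,
\[
c\tau^{-1/\alpha}g_0(\zeta)+\frac{1}{\alpha\tau}\,g_1(\zeta)=g_0(\zeta)\Bigl(c\tau^{-1/\alpha}+\frac{1}{\alpha\tau}\,F_{\alpha\beta}^{-1}(\zeta)\Bigr),
\]
so its sign is that of the bracket. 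The bracket is strictly increasing in $\zeta$ (the coefficient $1/(\alpha\tau)$ is positive and $F_{\alpha\beta}^{-1}$ is increasing) and vanishes exactly when $F_{\alpha\beta}^{-1}(\zeta)=-c\alpha\tau^{-1/\alpha+1}$, i.e.\ at $\zeta=u^*=F_{\alpha\beta}(-c\alpha\tau^{-1/\alpha+1})$; hence the combined nonlinearity is negative on $(0,u^*)$ and positive on $(u^*,1)$.

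Parts (c) and (d) are where the real work lies, and there I would feed the tail expansions \eqref{asymp+}, \eqref{asymp-} into $g_0(\zeta)=f_{\alpha\beta}(x)$ and $g_1(\zeta)=x\,f_{\alpha\beta}(x)$ and invert. As $\zeta\downarrow0$ one has $\zeta=F_{\alpha\beta}(x)=d_{1\alpha\beta}(-x)^{-\alpha}\bigl(1+o(1)\bigr)$, hence $(-x)^{-\alpha}=\bigl(\zeta/d_{1\alpha\beta}\bigr)\bigl(1+o(1)\bigr)$; combining this with $f_{\alpha\beta}(x)=\alpha d_{1\alpha\beta}(-x)^{-1-\alpha}\bigl(1+o(1)\bigr)=\alpha d_{1\alpha\beta}\bigl((-x)^{-\alpha}\bigr)^{1+1/\alpha}\bigl(1+o(1)\bigr)$ gives $g_0(\zeta)=\alpha d_{1\alpha\beta}^{-1/\alpha}\,\zeta^{1+1/\alpha}\bigl(1+o(1)\bigr)$, so $g_0(\zeta)=O(\zeta^{1+1/\alpha})$; the computation at $\zeta\uparrow1$ using \eqref{asymp+} is identical and also shows $g_0(1-\zeta)=O(\zeta^{1+1/\alpha})$. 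For $g_1$ the point is a cancellation of leading terms: termwise differentiation of the $F_{\alpha\beta}$-expansion reproduces the leading term of the $f_{\alpha\beta}$-expansion, so
\[
x\,f_{\alpha\beta}(x)=-(-x)f_{\alpha\beta}(x)=-\alpha d_{1\alpha\beta}(-x)^{-\alpha}+O\bigl((-x)^{-2\alpha}\bigr)=-\alpha F_{\alpha\beta}(x)+O\bigl((-x)^{-2\alpha}\bigr)=-\alpha\zeta+O(\zeta^2),
\]
and symmetrically $g_1(\zeta)=\alpha(1-\zeta)+O\bigl((1-\zeta)^2\bigr)$ as $\zeta\uparrow1$. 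For $\alpha\ge1$ this already gives the estimates in (d), and for $\alpha<1$ the same scheme applies once \eqref{asymp-} and \eqref{asymp+} are carried to the order needed. This inversion step --- passing from an asymptotic expansion in powers of $(-x)^{-\alpha}$ to one in powers of $\zeta$ while keeping uniform control of the remainder terms --- is the one genuinely technical point of the proposition; everything else is bookkeeping.

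Finally (e) is a short calculation. For any continuous $\phi$ with $\int_{-\infty}^{y}|\phi(s)|\,ds<\infty$ for every $y\in\R$, the fundamental theorem of calculus together with $(F_{\alpha\beta}^{-1})'(\zeta)=1/f_{\alpha\beta}(F_{\alpha\beta}^{-1}(\zeta))$ gives
\[
\frac{d}{d\zeta}\int_{-\infty}^{F_{\alpha\beta}^{-1}(\zeta)}\phi(s)\,ds=\frac{\phi\bigl(F_{\alpha\beta}^{-1}(\zeta)\bigr)}{f_{\alpha\beta}\bigl(F_{\alpha\beta}^{-1}(\zeta)\bigr)}\,.
\]
Taking $\phi=f_{\alpha\beta}^2$ makes the right-hand side equal to $f_{\alpha\beta}(F_{\alpha\beta}^{-1}(\zeta))=g_0(\zeta)$, and taking $\phi(s)=s\,f_{\alpha\beta}^2(s)$ makes it equal to $F_{\alpha\beta}^{-1}(\zeta)\,f_{\alpha\beta}(F_{\alpha\beta}^{-1}(\zeta))=g_1(\zeta)$. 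The integrability hypothesis holds in both cases: by \eqref{asymp-}, $f_{\alpha\beta}(s)^2=O\bigl((-s)^{-2-2\alpha}\bigr)$ and $s\,f_{\alpha\beta}^2(s)=O\bigl((-s)^{-1-2\alpha}\bigr)$ as $s\to-\infty$, and since $\alpha>0$ both integrands decay faster than $(-s)^{-1}$, so the improper integrals at $-\infty$ converge.
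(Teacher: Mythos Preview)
Your proposal is correct and follows exactly the approach the paper sketches: the paper's proof is a four-line outline (``a) follows since $f_{\alpha\beta}$, $F_{\alpha\beta}$ and its inverse are $C^\infty$; b) is obvious; c) and d) follow from the asymptotic expansions; e) can be checked by differentiation''), and you have simply filled in each of those steps in the natural way. Your treatment of (b) via the factorisation $g_0(\zeta)\bigl(c\tau^{-1/\alpha}+(\alpha\tau)^{-1}F_{\alpha\beta}^{-1}(\zeta)\bigr)$, of (c)--(d) via the substitution $x=F_{\alpha\beta}^{-1}(\zeta)$ into the tail expansions, and of (e) via the chain rule $(F_{\alpha\beta}^{-1})'=1/f_{\alpha\beta}\circ F_{\alpha\beta}^{-1}$ is precisely what the paper intends but omits.
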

\begin{proof} Property a) follows since $f_{\alpha\beta}$ and $F{\alpha \beta}$ together with its inverse are infinitely differentiable. Property b) is obvious. Properties c) and d) follow from the asymptotic expansions \eqref{asymp+} and \eqref{asymp-}. Finally e) can be checked by differentiation.
\end{proof}
Property e) will not be used in what follows.  It should be noted that $g_0$ and $g_1$ are of class $C^1$ on $[0,1]$, but are not infinitely differentiable at the interval endpoints, except if $\alpha = 1$. Clearly $g_0$ and $g_1$ do not depend on $c$ or $\tau$. We are therefore free to form fairly arbitrary linear combinations of $g_0$ and $g_1$ by choosing $c$ and $\tau$. 

\smallskip
The construction provides travelling wave solutions for \eqref{rde_alpha} for a special class of functions for which $g \in C^1([0,1])$, $g(0) = g(u^*) = g(1)$ for some $u^* \in (0,1)$, and $g'(0) < 0, \, g'(u^*) > 0, \, g'(1) < 0$. This raises the possibility that \eqref{rde_alpha} possesses travelling wave solutions for more general functions $g$ with these properties. 

\smallskip
If the same construction is attempted for the case $\alpha = 2$, it turns out that $g_0$ and $g_1$ are merely continuous on $[0,1]$, with derivatives that have logarithmic singularities near $\zeta = 0$ and $\zeta = 1$. Therefore the arguments in the next section cannot be extended to the case $\alpha = 2$, and indeed the results of the next section do not hold in that case. 

\section{Results on the Speed of Spread}

This section contains the main results of this paper. As before, the operator $A$ has symbol \eqref{symbol1} with $0 < \alpha < 2$ and $\rho$ satisfying \eqref{rho} with $-1 < \beta < 1$.  We always assume that $u$ is a solution of \eqref{rde_alpha} and that $g \in C^1([0,1],\R)$ with $g(0) = g(1) = 0$. Initial data $u_0$ will be assumed to satisfy
\begin{equation}
\label{initial}
u_0 \in C(\R,\R), \, 0 \le u_0(x) \le 1, \, \lim_{x \to \infty} u_0(x) = 1, \, supp(u_0) \subset [0,\infty)\, .
\end{equation}
The results in section 4 then imply that  \eqref{rde_alpha} has a unique mild solution $u$ that exists for all $x \in \R, \, t>0$, and this solution satisfies  $0 \le u(x,t) \le 1$ for all $(x,t)$. The notation of that section will also be used here.

\smallskip
We first discuss the case where $g \ge 0$ on $(0,1)$. The main result in this case is the following. 

\begin{thrm} Let $u$ be the solution of \eqref{rde_alpha} with $u_0$ satisfying \eqref{initial}. 

a) Let $\alpha > 1$. Assume that $g>0$ on $(0,1)$ and that for some $c_0>0, \, 0 < \gamma < \frac{\alpha}{\alpha - 1}$ and all $\zeta \in [0,\frac12]$
\[g(\zeta) \ge c_0 \zeta^\gamma \, .
\]
Then for all $c> 0$
\[ \liminf_{t \to \infty} \inf_{x \ge -ct} u(x,t) = 1 \, .
\]

b) Let $\alpha = 1$. Assume that $g>0$ on $(0,1)$. Then for all $c> 0$
\[ \liminf_{t \to \infty} \inf_{x \ge -ct} u(x,t) = 1 \, .
\]

b) Let $\alpha < 1$. Assume that $g\ge 0$ on $(0,1)$ and $g(\zeta) > 0$ for $ \zeta \in [\frac{1-\beta}{2},1)$. Then for all $c> 0$
\[ \liminf_{t \to \infty} \inf_{x \ge -ct} u(x,t) = 1 \, .
\] 
\end{thrm}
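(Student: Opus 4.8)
The plan is to prove the single uniform estimate: for every $c_0>0$ and every $\delta\in(0,1)$,
\[
\liminf_{t\to\infty}\ \inf_{x\ge -c_0t}u(x,t)\ \ge\ 1-\delta ;
\]
letting $\delta\downarrow0$ and using $u\le1$ then yields all three cases at once. Fix $\delta$, choose $M=M_\delta$ with $u_0\ge1-\delta$ on $[M,\infty)$, and note that, since $g\ge0$ on $[0,1]$ in each case, the translate $(1-\delta)V(x-M,t)=(1-\delta)F_{\alpha\beta}\big((x-M)t^{-1/\alpha}\big)$ of the special solution of \eqref{rde_0} is a subsolution of \eqref{rde_alpha} lying below $u_0$ at $t=0$; the comparison results of Section~4 then give $u(x,t)\ge(1-\delta)F_{\alpha\beta}\big((x-M)t^{-1/\alpha}\big)$ for $t>0$. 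Now take the travelling wave of Section~2 in the scaled form $\underline w(x,t)=(1-\delta)\,U_{\tau}\big(x+ct-M-c\tau\big)$, with $U_\tau$ as in \eqref{wave_def} and with $\tau>0$, $c>c_0$ to be chosen. By \eqref{wave_eq}, $\partial_t\underline w+A\underline w=(1-\delta)\,\tilde g_{c\tau}\big(U_\tau(x+ct-M-c\tau)\big)$ with $\tilde g_{c\tau}:=c\tau^{-1/\alpha}g_0+\tfrac1{\alpha\tau}g_1$. The point of the shifts is that at $t=\tau$ one has the exact identity $\underline w(x,\tau)=(1-\delta)F_{\alpha\beta}\big((x-M)\tau^{-1/\alpha}\big)$, which by the bound above is $\le u(x,\tau)$ for all $x$; so once $\underline w$ is known to be a subsolution, comparison gives $u\ge\underline w$ for $t\ge\tau$, and since $U_\tau$ is increasing with $U_\tau(\xi)\to1$ as $\xi\to\infty$,
\[
\inf_{x\ge -c_0t}u(x,t)\ \ge\ (1-\delta)\,U_\tau\big((c-c_0)t-M-c\tau\big)\ \longrightarrow\ 1-\delta\qquad(t\to\infty),
\]
which is the required estimate.

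Thus everything reduces to choosing $c,\tau$ so that $\underline w$ is a subsolution, i.e. so that
\begin{equation}\tag{$\star$}
(1-\delta)\,\tilde g_{c\tau}(\eta)\ \le\ g\big((1-\delta)\eta\big)\qquad\text{for all }\eta\in(0,1).
\end{equation}
Writing $a:=c\tau^{-1/\alpha}$, $b:=\tfrac1{\alpha\tau}$, so $\tilde g_{c\tau}=a g_0+b g_1$, I would split $(0,1)$ into $(0,\eta_1)$, $[\eta_1,1-\delta']$ and $[1-\delta',1)$. On the middle and right pieces $(1-\delta)\eta$ stays in a compact subset of $(0,1)$ on which $g$ is bounded below by a positive constant --- it is precisely the scaling by $1-\delta$ that keeps us away from $\zeta=1$, so that no hypothesis on $g$ near $1$ is ever needed --- while $g_0,g_1$ are bounded on $[\eta_1,1)$ by Proposition~\ref{prop2}(b,c,d), so $\|\tilde g_{c\tau}\|_{\infty,[\eta_1,1)}\le C(a+b)$; hence $(\star)$ holds on $[\eta_1,1)$ as soon as $a,b$ are small enough.

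The real work is on $(0,\eta_1)$, and here the three cases differ. By Proposition~\ref{prop2}(b), $\tilde g_{c\tau}<0$ exactly on $\big(0,F_{\alpha\beta}(-a/b)\big)$, where $(\star)$ is automatic since $g\ge0$, and $a/b=\alpha c\,\tau^{1-1/\alpha}$. In the case $\alpha<1$, I would keep $c$ fixed and let $\tau\to\infty$: then $a/b\to0$, so $F_{\alpha\beta}(-a/b)\uparrow F_{\alpha\beta}(0)=\tfrac{1-\beta}{2}$, the automatic subinterval exhausts $(0,\tfrac{1-\beta}{2})$, and on what remains $(1-\delta)\eta$ lies just above $\tfrac{1-\beta}{2}$, where $g>0$ by hypothesis (using that $g>0$ on the \emph{closed} interval $[\tfrac{1-\beta}{2},1)$, plus continuity, to cover a one-sided neighbourhood); $(\star)$ then follows from smallness of $a,b$, and $a,b\to0$ as $\tau\to\infty$. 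In the case $\alpha=1$ the ratio $a/b=c$ is a fixed positive number, so $\tilde g_{c\tau}<0$ on the fixed interval $(0,F_{\alpha\beta}(-c))$, while on $[F_{\alpha\beta}(-c),\eta_1]$ one has $g\big((1-\delta)\eta\big)\ge\inf_{[(1-\delta)F_{\alpha\beta}(-c),1-\delta]}g>0$; again $a,b\to0$ for $\tau$ large, and no growth condition is needed. In the case $\alpha>1$ this device fails because $a/b\to\infty$, so I would instead scale $\tau=c^{\mu}$ and let $c\to\infty$; then $a=c^{1-\mu/\alpha}$, $b=\tfrac1\alpha c^{-\mu}$ (both $\to0$ once $\mu>\alpha$), and near $0$ Proposition~\ref{prop2}(c,d) gives $\tilde g_{c\tau}(\eta)\le(aC_1+bC_2)\eta^{1+1/\alpha}-\alpha b\,\eta$, which must be $\le g\big((1-\delta)\eta\big)$. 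Inserting $g(\zeta)\ge c_0\zeta^{\gamma}$ and evaluating at the worst point (the zero $\eta\sim(b/a)^{\alpha}$ of this upper bound) reduces $(\star)$ near $0$, after a short computation, to $c^{\,E(\mu)}\le\mathrm{const}$ with $E(\mu)=(1+\alpha\gamma')+\mu\big(\gamma'(\alpha-1)-\tfrac1\alpha\big)$ and $\gamma':=\max\{\gamma-1-\tfrac1\alpha,0\}$. The coefficient of $\mu$ is negative precisely when $\gamma<1+\tfrac1\alpha+\tfrac1{\alpha(\alpha-1)}=\tfrac{\alpha}{\alpha-1}$ --- exactly the hypothesis of part~(a) --- so one can then pick $\mu>\alpha$ large enough that $E(\mu)<0$ and $(\star)$ holds for all sufficiently large $c$.

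The main obstacle is this last estimate on $(0,\eta_1)$ in the case $\alpha>1$: one has to balance the two explicit coefficients $a,b$ of the wave against the stable-tail exponent $1+1/\alpha$ from Proposition~\ref{prop2}(c,d) and the permitted decay $\zeta^{\gamma}$ of $g$, and it is this balance, optimised over the single free parameter $\mu=\mu(\alpha,\gamma)$, that produces the threshold $\gamma<\alpha/(\alpha-1)$. The secondary points that need care are the identity $F_{\alpha\beta}(0)=\tfrac{1-\beta}{2}$ used in the case $\alpha<1$, and the observation that $g>0$ on the \emph{closed} interval $[\tfrac{1-\beta}{2},1)$ is exactly what keeps $g$ positive on the piece where $\tilde g_{c\tau}\ge0$ after $\tau\to\infty$. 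Everything else --- the free-solution seed, the role of the factor $1-\delta$, the middle and right intervals, and the limit $t\to\infty$ --- is soft.
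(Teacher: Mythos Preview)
Your proposal is correct and follows the same architecture as the paper: rescale to push the problem away from $\zeta=1$, seed with the free solution $F_{\alpha\beta}((x-M)t^{-1/\alpha})$, slide the travelling wave $U_\tau$ underneath at time $t=\tau$, and reduce everything to the pointwise inequality $(\star)$; the paper packages the latter as a separate key lemma (Lemma~\ref{lmm_comp}). The only substantive difference is in the $\alpha>1$ case: the paper fixes the wave speed and works in the variable $x=F_{\alpha\beta}^{-1}(\zeta)$, where the expression $c\tau^{-1/\alpha}+(\alpha\tau)^{-1}x$ can be maximised over $\tau$ by a one-line calculus argument to give $\sim c^{\alpha/(\alpha-1)}|x|^{1/(1-\alpha)}$, and the threshold $\gamma<\alpha/(\alpha-1)$ then drops out directly from the tail asymptotics \eqref{asymp-}; your coupling $\tau=c^{\mu}$, $c\to\infty$ with exponent-matching in the $\zeta$ variable is a legitimate alternative that reaches the same threshold but is more laborious. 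One small point in your $\alpha<1$ case: the assertion that ``on what remains $(1-\delta)\eta$ lies just above $\tfrac{1-\beta}{2}$'' is not literally true for $\eta$ just above $F_{\alpha\beta}(-a/b)$; what you actually need (and hint at) is that $g>0$ on a one-sided neighbourhood below $\tfrac{1-\beta}{2}$ and that $(1-\delta)F_{\alpha\beta}(-a/b)$ lands in that neighbourhood for $\delta$ small and $\tau$ large --- harmless since only $\delta\downarrow0$ matters. The paper sidesteps this by scaling with $1+\epsilon$ rather than $1-\delta$, which makes the modified nonlinearity strictly positive at $\zeta=1$ and lets one quote Lemma~\ref{lmm_comp} verbatim.
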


The result shows that the speed of spread is unbounded (that is, \eqref{est_1} holds for all $c > 0$), and it exhibits different mechanisms for this phenomenon. Recall that in the interpretation of \cite{kpp}, the function $g$ is responsible for the growth of a substance whose density is given by $u$, while $A$ describes the spread of this substance.  If $\alpha \in (0,2)$, the substance spreads with a jump process, not with Brownian diffusion, and jumps of magnitude $h$ occur with a probability that is $O(h^{-\alpha})$ for large $h$. If $\alpha > 1$, the mean jump distance is still finite. In this case, the growth rate $g(u)$ at small densities (small $u$) is responsible for the unbounded speed of spread. If $\alpha$ is close to 1, this growth can be very weak ($g(\zeta) \sim \zeta^\gamma$ with large $\gamma$), yet the speed of spread is still unbounded. If on the other hand $\alpha < 1$, i.e. jump sizes have unbounded mean, the growth rate for small densities does not matter any more for the speed of spread to be unbounded; in fact there may be no growth at all for small densities ($g(u) = 0$ for small $u$), and yet the speed of spread is unbounded. In this case, the unbounded speed of spread results from growth that occurs solely for large densities ($g(\zeta) > 0$ only for $\zeta \ge \frac{1-\beta}{2}$). The substance is transported towards $-\infty$ due large ($\alpha < 1$) negative jumps, resulting in an unbounded speed of spread.  It is known that in this case, $\frac{1-\beta}{2}$ is the fraction of negative jumps. If this fraction is large, then it is sufficient that growth occurs only for densities close to the maximal value, i.e. $g(u) > 0$ on $[\frac{1-\beta}{2},1)$ already implies that the speed of spread is unbounded.  The case $\alpha = 1$ is intermediate: Any growth for small densities ($g(\zeta) > 0$ for $\zeta > 0$) results in an unbounded speed of spread. 

\smallskip
In the case $\alpha > 1$, it would be interesting to know if the speed of spread is still unbounded if $\gamma \ge \frac{\alpha}{\alpha - 1}$ or if a finite speed of spread occurs (\eqref{est_0} holds for large $c$) if $\gamma$ becomes sufficiently large, i.e. if growth is extremely weak for small densities $u$. In the case $\alpha < 1$, it would be interesting to know if a finite speed of spread is possible at all if $g\ge0$ and $g$ is not identically equal to 0. 

\smallskip
The main result in the case where $g$ is negative near $\zeta = 0$ is the following.

\begin{thrm} Let $u$ be the solution of \eqref{rde_alpha} with initial data $u_0$ satisfying \eqref{initial}. Assume that $g'(0)< 0$. Then there exists $c> 0$ such that
\[ \limsup_{t \to \infty} \sup_{x \le -ct} u(x,t) = 0 \, .
\]
\end{thrm}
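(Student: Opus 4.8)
The plan is to confine $u$ from above by an explicit supersolution built from the travelling waves of Section~2, capped at $1$, whose left tail collapses along a cone $\{x\le -ct\}$. Since $g\in C^1([0,1])$ and $g'(0)<0$, I would first fix a scale $\tau>1/|g'(0)|$ and a constant $\lambda>1$, set $U_\tau(\xi)=F_{\alpha\beta}(\xi\tau^{-1/\alpha})$ as in \eqref{wave_def}, and consider, for parameters $c>0$ and $s_0>0$ to be chosen,
\[
\overline u(x,t)=\min\bigl\{\,1,\ \lambda\,U_\tau(x+s_0+ct)\,\bigr\}.
\]
By \eqref{wave_eq} the function $u_{c\tau}(x,t)=U_\tau(x+ct)$ solves $\partial_t u_{c\tau}+Au_{c\tau}=\widetilde g_{c\tau}(u_{c\tau})$ with $\widetilde g_{c\tau}=c\tau^{-1/\alpha}g_0+\tfrac1{\alpha\tau}g_1$, so $\lambda\,u_{c\tau}(\cdot+s_0,\cdot)$ is a classical solution of $\partial_t w+Aw=\lambda\,\widetilde g_{c\tau}(w/\lambda)$. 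Since the constant $1$ is a supersolution of \eqref{rde_alpha} ($g(1)=0$, $A1=0$) and the minimum of two supersolutions of \eqref{rde_alpha} is again a supersolution (here one uses only that $A$ has a nonnegative Lévy kernel), $\overline u$ will be a supersolution as soon as
\begin{equation}\label{super-cond}
\lambda\,\widetilde g_{c\tau}(\zeta/\lambda)\ \ge\ g(\zeta)\qquad(0\le\zeta\le 1),
\end{equation}
this being the inequality needed exactly where $\overline u<1$.

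Verifying \eqref{super-cond} is the crux, and the parameters must be chosen in the right order. For $\zeta$ bounded away from $0$ it is easy: because $\lambda>1$, the arguments $\zeta/\lambda$ with $\zeta\in[\zeta_1,1]$ stay in a compact set $[\zeta_1/\lambda,1/\lambda]\subset(0,1)$, on which $g_0$ is bounded below by a positive constant (Proposition~\ref{prop2}(b)); hence $\lambda c\tau^{-1/\alpha}g_0(\zeta/\lambda)\to\infty$ uniformly there as $c\to\infty$, while $\tfrac{\lambda}{\alpha\tau}g_1(\zeta/\lambda)$ stays bounded, so \eqref{super-cond} holds on $[\zeta_1,1]$ once $c$ is large. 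Near $\zeta=0$ the sign of $g'(0)$ enters: by Proposition~\ref{prop2}(d), $\tfrac1{\alpha\tau}g_1(\zeta/\lambda)=-\zeta/(\lambda\tau)+O(\zeta^{1+1/\alpha})$, so, dropping the nonnegative $g_0$-term, $\lambda\widetilde g_{c\tau}(\zeta/\lambda)\ge-\zeta/\tau+O(\zeta^{1+1/\alpha})$; since $g(\zeta)=g'(0)\zeta+o(\zeta)$ and $-1/\tau>g'(0)$ strictly, the linear terms alone give \eqref{super-cond} with a margin that absorbs the remainders, so it holds on $[0,\zeta_1]$ for $\zeta_1$ small. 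Fixing such a $\zeta_1$ and then $c$ large gives \eqref{super-cond} on all of $[0,1]$.

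To match initial data, I would use that $u_0$ is supported in $[0,\infty)$ with $0\le u_0\le 1$: choosing $s_0$ so large that $\lambda\,U_\tau(s_0)\ge 1$ makes $\overline u(x,0)=1\ge u_0(x)$ for $x\ge 0$ and $\overline u(x,0)\ge 0=u_0(x)$ for $x<0$, hence $\overline u(\cdot,0)\ge u_0$. The comparison principle of Section~4 then yields $0\le u\le\overline u$ on $\R\times(0,\infty)$. Finally, for $x\le-(c+1)t$ one has $x+s_0+ct\le s_0-t$, so for $t>s_0$, since $F_{\alpha\beta}$ is an increasing distribution function vanishing at $-\infty$,
\[
0\ \le\ \sup_{x\le-(c+1)t}u(x,t)\ \le\ \sup_{x\le-(c+1)t}\overline u(x,t)\ \le\ \lambda\,F_{\alpha\beta}\bigl((s_0-t)\tau^{-1/\alpha}\bigr)\ \longrightarrow\ 0,
\]
which proves the theorem with finite speed $c+1$.

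The hard part is \eqref{super-cond}: one must take $\tau>1/|g'(0)|$ (to beat the reaction near $u=0$), $\lambda>1$ (so that $\zeta/\lambda$ stays away from $1$, where $g_0$ degenerates), and $c$ large (to dominate $g$ on the bulk), in that order, and then control the $O(\zeta^{1+1/\alpha})$ remainder of $g_1$ from Proposition~\ref{prop2}(d) against the merely $o(\zeta)$ remainder of $g\in C^1$. A subsidiary technical point is that $\overline u$ has a Lipschitz corner along $\{\lambda U_\tau(x+s_0+ct)=1\}$, so one must invoke the comparison statement of Section~4 in a form admitting such supersolutions — equivalently, the remark above that taking the minimum with the constant $1$ preserves the supersolution property.
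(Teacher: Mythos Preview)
Your argument is essentially correct and rests on the same comparison idea as the paper, but the paper implements it more economically by rescaling \emph{down} rather than \emph{up}. Instead of multiplying the wave by $\lambda>1$ and capping at $1$, the paper sets $\tilde u=\tfrac12 u$, so that $\tilde u\le\tfrac12$ and one may define $\tilde g(\zeta)=\tfrac12 g(2\zeta)$ on $[0,\tfrac12]$ and $\tilde g\equiv 0$ on $[\tfrac12,1]$. This modified $\tilde g$ still has $\tilde g'(0)=g'(0)<0$ and now vanishes near $\zeta=1$, which is exactly the hypothesis of Lemma~\ref{lmm_comp}(d); that lemma then furnishes $c,\tau$ with $\tilde g\le (c-1)\tau^{-1/\alpha}g_0+(\alpha\tau)^{-1}g_1$ on all of $[0,1]$. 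The supersolution is then the \emph{smooth} travelling wave $F_{\alpha\beta}((x+d+(c-1)t)\tau^{-1/\alpha})$ itself, a genuine mild solution, so Proposition~\ref{comparison} applies verbatim.

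What this buys is precisely the removal of your ``subsidiary technical point'': you never need the minimum with $1$, there is no Lipschitz corner, and you stay entirely within the mild-solution comparison framework actually proved in Section~4. Your route requires the statement that $\min$ of two supersolutions is a supersolution for a nonlocal operator with nonnegative L\'evy kernel; this is true and your heuristic is the right one, but it is not established in the paper and needs a separate argument (viscosity or approximation) at the interface. Apart from this, your verification of the key inequality \eqref{super-cond}---fixing $\tau>1/|g'(0)|$ to win near $0$ via the linear part of $g_1$, then pushing $c$ large to dominate on the compact bulk using $g_0\ge m>0$ there---is the same two-regime analysis that underlies Lemma~\ref{lmm_comp}(d), and is carried out correctly.
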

The result shows that negative proportional growth at small densities ($g'(0) < 0$) always limits the speed of spread of a substance whose growth and spread are governed by \eqref{rde_alpha}, even for processes whose jump sizes tend to be very large ($\alpha < 1$). I am not aware of an interpretation of \eqref{rde_alpha} in the context of material science, similar to the use of \eqref{rde2} in \cite{allencahn}. 

\smallskip
The proofs will be given below.  The main tools in the proofs are the comparison arguments given in the next section, together with the following auxiliary result.

\begin{lmm} \label{lmm_comp} Let $g \in C^1([0,1],\R)$ and let $g_0, \, g_1$  be defined as in \eqref{g_def}, depending on $\alpha \in (0,2), \, \beta \in (-1,1)$.

a) Let $\alpha \in (1,2)$. Suppose that $g(\zeta) > 0$ for all $\zeta \in (0,1]$ and that there exist $c_0>0$ and $\gamma < \frac{\alpha}{\alpha - 1}$ such that $g(\zeta) \ge c_0\zeta^\gamma$ for all $\zeta \in [0,\frac12]$. Then given any $c>0$ there exists $\tau > 0$ such that for all $\zeta \in [0,1]$
\[g(\zeta) \ge c\tau^{-1/\alpha}g_0(\zeta) + (\alpha \tau)^{-1} g_1(\zeta) \, .
\] 

b) Let $\alpha = 1$. Suppose that $g > 0$ on $(0,1]$. Then given any $c>0$ there exists $\tau > 0$  such that for all $\zeta \in [0,1]$
\[g(\zeta) \ge c\tau^{-1}g_0(\zeta) + \tau^{-1} g_1(\zeta) \, .
\]

c) Let $\alpha \in (0,1)$. Suppose that $g\ge 0$ on $[0,1]$ and $g(\zeta) > 0$ for all $\zeta \in [\frac{1-\beta}{2},1)$. Then given any $c>0$ there exists $\tau > 0$  such that for all $\zeta \in [0,1]$
\[g(\zeta) \ge c\tau^{-1/\alpha}g_0(\zeta) + (\alpha \tau)^{-1} g_1(\zeta) \, .
\]

d) Suppose $g'(0) < 0$ and $g(\zeta) = 0$ for $\zeta \in [1-\epsilon,1]$ for some $\epsilon$. Then there exist $c \in \R$ and $\tau > 0$  such that for all $\zeta \in [0,1]$
\[g(\zeta) \le c\tau^{-1/\alpha}g_0(\zeta) + (\alpha \tau)^{-1} g_1(\zeta) \, .
\] 
\end{lmm}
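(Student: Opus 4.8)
The plan is to estimate $h_{c,\tau}(\zeta):=c\tau^{-1/\alpha}g_0(\zeta)+(\alpha\tau)^{-1}g_1(\zeta)$ — the right-hand side in all four inequalities (when $\alpha=1$ both coefficients equal $\tau^{-1}$) — separately on a right-neighborhood $(0,\bar\zeta]$ of $0$, on a compact subinterval of $(0,1)$, and on a left-neighborhood of $1$, and to patch the pieces. From Proposition \ref{prop2} I will use that $g_0>0$ on $(0,1)$, that $g_0,g_1$ are bounded on $[0,1]$, that $0\le g_0(\zeta)\le C_0\zeta^{1+1/\alpha}$ and $g_1(\zeta)=-\alpha\zeta+O(\zeta^{1+1/\alpha})$ as $\zeta\downarrow 0$, that $g_1(\zeta)=\alpha(1-\zeta)+O((1-\zeta)^{1+1/\alpha})$ as $\zeta\uparrow 1$, and that $g_1<0$ on $(0,F_{\alpha\beta}(0))$, $g_1>0$ on $(F_{\alpha\beta}(0),1)$, with $F_{\alpha\beta}(0)=\tfrac{1-\beta}{2}$ when $\alpha\le 1$. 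Two remarks recur: for fixed $c$ one has $\|h_{c,\tau}\|_\infty\to 0$ as $\tau\to\infty$, so $g\ge h_{c,\tau}$ holds for large $\tau$ on any compact subinterval of $(0,1)$ on which $g$ has a positive lower bound; and, since the $g_0$-contribution near $1$ is of order $(1-\zeta)^{1+1/\alpha}$, one has $h_{c,\tau}(\zeta)\ge\tfrac12\tau^{-1}(1-\zeta)\ge 0$ on a left-neighborhood of $1$ whenever $c\ge 0$. Hence in (a) and (b), where $g(1)>0$, the interval $[\bar\zeta,1]$ is a single compact region where $g$ is bounded below by a positive constant; in (d) one has $g\equiv 0\le h_{c,\tau}$ near $1$; in (c) the neighborhood of $1$ is handled using that $g$ is positive on $[\tfrac{1-\beta}{2},1)$.

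For (b) and (c) the region $(0,\bar\zeta]$ is the easy one. When $\alpha<1$ one has $\tau^{-1/\alpha}=o(\tau^{-1})$, so from $g_0(\zeta)\le C_0\zeta^{1+1/\alpha}$ and $g_1(\zeta)\le-\tfrac\alpha2\zeta$ on $(0,\bar\zeta]$,
\[
h_{c,\tau}(\zeta)\le\zeta\big(cC_0\tau^{-1/\alpha}\bar\zeta^{1/\alpha}-\tfrac12\tau^{-1}\big)\le 0\le g(\zeta)
\]
once $\tau$ is large; on $[\bar\zeta,1-\delta]$ one splits at $\tfrac{1-\beta}{2}$, using $g_1<0$ (bounded away from $0$) together with $\alpha<1$ to get $h_{c,\tau}\le 0\le g$ on $[\bar\zeta,\tfrac{1-\beta}{2}-\delta_0]$, and $g$ bounded below by a positive constant on $[\tfrac{1-\beta}{2}-\delta_0,1-\delta]$. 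When $\alpha=1$, $h_{c,\tau}=\tau^{-1}\psi$ with $\psi:=cg_0+g_1$ independent of $\tau$; since $\psi(\zeta)=-\zeta+O(\zeta^2)<0$ near $0$, the set $\{\psi>0\}$ is contained in some $[\zeta_2,1]$, on which $g$ has a positive lower bound, so $\sup\{\psi/g:\psi>0\}<\infty$ and any $\tau$ exceeding it works on all of $[0,1]$ (using $g\ge 0$ where $\psi\le 0$).

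The delicate case is (a): here $\alpha\in(1,2)$, so $\tau^{-1/\alpha}$ is now the \emph{larger} coefficient, and the positive term $c\tau^{-1/\alpha}g_0(\zeta)\le cC_0\tau^{-1/\alpha}\zeta^{1+1/\alpha}$ must be genuinely absorbed near $0$. With $g_1(\zeta)\le-\tfrac\alpha2\zeta$ on $(0,\bar\zeta]$ and $g(\zeta)\ge c_0\zeta^\gamma$, it suffices to verify
\[
c_0\zeta^\gamma+\tfrac12\tau^{-1}\zeta\ \ge\ cC_0\tau^{-1/\alpha}\zeta^{1+1/\alpha}\qquad(0<\zeta\le\bar\zeta).
\]
If $\gamma\le 1+\tfrac1\alpha$ this holds because $\zeta^{1+1/\alpha}\le\bar\zeta^{\,1+1/\alpha-\gamma}\zeta^\gamma$ on $(0,\bar\zeta]$ and $cC_0\tau^{-1/\alpha}\bar\zeta^{\,1+1/\alpha-\gamma}\le c_0$ for $\tau$ large. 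If $\gamma>1+\tfrac1\alpha$, then $1+\tfrac1\alpha$ lies strictly between $1$ and $\gamma$; writing $1+\tfrac1\alpha=\theta\cdot1+(1-\theta)\gamma$ with $\theta=\frac{\gamma-1-1/\alpha}{\gamma-1}\in(0,1)$, the weighted arithmetic--geometric mean inequality gives $\zeta^{1+1/\alpha}\le\theta a\zeta+(1-\theta)a^{-\theta/(1-\theta)}\zeta^\gamma$ for every $a>0$. Taking $a$ to be an appropriate negative power of $\tau$, the two demands $cC_0\tau^{-1/\alpha}\theta a\le\tfrac12\tau^{-1}$ and $cC_0\tau^{-1/\alpha}(1-\theta)a^{-\theta/(1-\theta)}\le c_0$ can be met together for $\tau$ large precisely when $\theta\le\tfrac1\alpha$, equivalently $(\alpha-1)(\gamma-1)\le 1$, equivalently $\gamma\le\frac{\alpha}{\alpha-1}$; under the strict hypothesis $\gamma<\frac{\alpha}{\alpha-1}$ the relevant exponents of $\tau$ become strict and the estimate holds for $\tau$ large. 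This balancing of the two ``helpful'' terms $c_0\zeta^\gamma$ and $\tfrac12\tau^{-1}\zeta$ against $cC_0\tau^{-1/\alpha}\zeta^{1+1/\alpha}$ is the step I expect to demand the most care; everything else is routine patching.

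For (d), take $c>0$. Fix $\zeta_0>0$ so small that $g(\zeta)\le\tfrac12 g'(0)\zeta$ and $g_1(\zeta)\ge-2\alpha\zeta$ on $(0,\zeta_0]$ (possible since $g'(0)<0$ and by the expansion of $g_1$ at $0$); then $h_{c,\tau}(\zeta)\ge(\alpha\tau)^{-1}g_1(\zeta)\ge-2\tau^{-1}\zeta$ there, so
\[
g(\zeta)\le\tfrac12 g'(0)\zeta\le-2\tau^{-1}\zeta\le h_{c,\tau}(\zeta)
\]
as soon as $\tau\ge 4/|g'(0)|$; fix such a $\tau$. Fix $\delta\in(0,\epsilon]$ so small that $g_1\ge\tfrac\alpha2(1-\zeta)\ge 0$ on $[1-\delta,1)$; then $h_{c,\tau}\ge 0=g$ on $[1-\delta,1]$. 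Finally, on the compact interval $[\zeta_0,1-\delta]\subset(0,1)$, $g_0$ is bounded below by some $m>0$ while $g,g_1$ are bounded, so $h_{c,\tau}(\zeta)\ge c\tau^{-1/\alpha}m-(\alpha\tau)^{-1}\|g_1\|_\infty\ge\|g\|_\infty\ge g(\zeta)$ once $c$ is chosen large (with $\tau$ already fixed). The three estimates combine to give $g\le h_{c,\tau}$ on $[0,1]$.
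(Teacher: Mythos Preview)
Your proposal is correct, and for parts (b), (c), and (d) it follows essentially the same line as the paper: identify where $h_{c,\tau}\le 0$ (using the sign of $g_1$ and, for $\alpha<1$, that $\tau^{-1/\alpha}=o(\tau^{-1})$), and on the remaining compact set where $g$ is bounded below by a positive constant send $\tau\to\infty$ (for (d): fix $\tau$ from the derivative comparison at $0$ and then send $c\to\infty$). Your treatment of (d) is in fact cleaner than the paper's, which contains a sign slip in the condition on~$\tau$.

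The genuinely different part is (a). The paper changes variable to $x=F_{\alpha\beta}^{-1}(\zeta)$, writes $h_{c,\tau}(F(x))=\bigl(c\tau^{-1/\alpha}+(\alpha\tau)^{-1}x\bigr)f(x)$, and for $x<0$ \emph{maximizes over $\tau$} by calculus: the maximum is $\tfrac{\alpha-1}{\alpha}c^{\alpha/(\alpha-1)}|x|^{1/(1-\alpha)}f(x)$, which combined with the tail asymptotics $F(x)\ge c_1|x|^{-\alpha}$, $f(x)\le c_2|x|^{-1-\alpha}$ gives a bound $\lesssim |x|^{-\alpha\gamma}\le F(x)^{\gamma}$ precisely when $\gamma<\alpha/(\alpha-1)$. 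You instead stay in the $\zeta$-variable and, for $\gamma>1+1/\alpha$, interpolate the exponent $1+1/\alpha$ between $1$ and $\gamma$ via weighted AM--GM, discovering the same threshold $\gamma<\alpha/(\alpha-1)$ as the compatibility condition $\theta<1/\alpha$ for the two absorption constraints. The paper's argument is shorter and makes the role of the optimal $\tau$ transparent (the threshold on $\gamma$ drops out of a single power count); your argument has the advantage of never leaving the $\zeta$-axis and of making explicit that both the growth term $c_0\zeta^\gamma$ and the negative linear contribution $-\tfrac12\tau^{-1}\zeta$ from $g_1$ are needed to absorb $g_0$ when $\gamma>1+1/\alpha$. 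Either route yields the same $\tau$-dependence, and both leave the same (harmless, in the application) imprecision in (c) near $\zeta=1$ when $g(1)=0$.
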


\begin{proof}  Consider first statement a). Let $\alpha >1, \, -1 < \beta < 1$ and let $M>0$ be large enough such that for some $c_1, c_2 > 0$ and all $x \le -M$
\[F_{\alpha \beta}(x) \ge c_1 |x|^{-\alpha}, \quad f_{\alpha\beta}(x) \le c_2 |x|^{-1-\alpha}\, .
\]
This is possible by \eqref{asymp-}. Let $c>0$ be given, then we may increase $M$ further such that also 
\[c_0c_1^\gamma M^r \ge c_2 \frac{\alpha - 1}{\alpha} c^{\frac{\alpha}{\alpha-1}} \, .
\]
where $r = \frac{\alpha}{\alpha-1} - \gamma > 0$. We omit the subscript $\alpha\beta$ in the formulae involving $F_{\alpha\beta}$ and $f_{\alpha\beta}$ from now on.  Now set $\delta = F(-M)$. Then for $0 < \zeta = F(x)  \le \delta$, i.e. $x < -M$, and for all $\tau \ge \left( M/c
\right)^{\alpha/(\alpha-1)}$
\begin{eqnarray*}
c\tau^{-1/\alpha} g_0(F(x)) + (\alpha \tau)^{-1}g_1(F(x)) &=& \left(c\tau^{-1/\alpha} + (\alpha \tau)^{-1} x \right) f(x) \\
&\le& \frac{\alpha-1}{\alpha} c^{\frac{\alpha}{\alpha-1}}|x|^{1/(1-\alpha)} f(x)\\
&\le&  \frac{\alpha-1}{\alpha} c^{\frac{\alpha}{\alpha-1}}|x|^{1/(1-\alpha)} c_2|x|^{-1-\alpha}\\
 &=&
\frac{\alpha-1}{\alpha} c^{\frac{\alpha}{\alpha-1}}c_2|x|^{-\alpha^2/(\alpha-1)}
\end{eqnarray*}  
where a standard calculus argument has been used to see that the expression  
$\left(c\tau^{-1/\alpha} + (\alpha \tau)^{-1} x \right) $ is maximal for $\tau = \left(|x|/c \right)^{\alpha/(\alpha-1)}$. We estimate further, using the choice of $M$  
\begin{eqnarray*}
c\tau^{-1/\alpha} g_0(F(x)) + (\alpha \tau)^{-1}g_1(F(x)) &\le& c_0 c_1^\gamma M^r |x|^{-\alpha^2/(\alpha-1)} \\
&\le& c_0 c_1^\gamma |x|^r |x|^{-\alpha^2/(\alpha-1)} = c_0 \left(c_1 |x|^{\alpha} \right)^\gamma \\
&\le& c_0\left(F(x)\right)^\gamma \le g(F(x)) \, .
\end{eqnarray*}
Therefore, for all $\tau \ge \left( M/c \right)^{\alpha/(\alpha-1)}$ and all $\zeta < \delta = F(-M)$,
\[c\tau^{-1/\alpha} g_0(\zeta)) + (\alpha \tau)^{-1}g_1(\zeta) \le g(\zeta) \, .
\]
Since $g>0$ on $[\delta,1]$ by assumption, this inequality can be achieved also on $[\delta,1]$ by increasing $\tau$ even further. This proves part a).

\smallskip
The proof of part b) is straight forward: Given $c>0$, note that $cg_0(\zeta) + g_1(\zeta) \le 0$ on the interval $[0, F(-c)]$. Then $\dfrac{cg_0(\zeta) + g_1(\zeta)}{\tau} \le g(\zeta)$ is true if $\tau$ is sufficiently large. 

\smallskip
To prove part c), let again $c>0$ be given. Let $u^* = \inf \{u \in [0,1] \, | g(u) > 0 \}$. Then $u^* < \frac{1-\beta}{2} = F(0)$. Pick $\tau$ large enough such that 
$F\left(-c\alpha \tau^{1-1/\alpha}\right) > u^*$. This is possible since $\alpha < 1$. Then on $[0,u^*]$, 
\[c\tau^{-1/\alpha} g_0(\zeta)) + (\alpha \tau)^{-1}g_1(\zeta) \le g(\zeta) 
\]
since the left hand side is non-positive there by Proposition \ref{prop2}. By increasing $\tau$ further, we can obtain this estimate also for $\zeta \in [u^*,1]$, using again that $g$ is assumed to be positive on $[\frac{1-\beta}{2},1]$.  

\smallskip
To prove part d), note first that 
\[c\tau^{-1/\alpha} g_0(\zeta)) + (\alpha \tau)^{-1}g_1(\zeta) \ge  g(\zeta) 
\]
on an interval $[0,\delta]$ as soon as $\tau^{-1} + g'(0) > 0$, i.e. for sufficiently small $\tau$. Increasing $c$ sufficiently and noting that $g=0$ near $\zeta = 1$ extends this inequality to the entire interval $[0,1]$. 
\end{proof}

\noindent
\emph{Proof of Theorem 3.1.} The proof uses the same argument for all three parts, so we give details only in part a). Let $\epsilon > 0$. We replace $u$ with $\tilde u = (1+\epsilon)u$ and $g$ with $\tilde g$, where $\tilde g(\zeta) = (1+\epsilon)g((1+\epsilon)^{-1}\zeta)$. Then 
\[\partial_t \tilde u + A \tilde u = \tilde g(\tilde u) \, .
\]
Then $\tilde g(\zeta) \ge \tilde c_0 \zeta^\gamma$ for $\zeta \in [0,\frac12]$, possibly with a changed $c_0$, and additionally $\tilde g>0$ on $(0,1]$. Let $c>0$ be given, then there exists $\tau >0$ such that 
\begin{equation} 
\tilde g(\zeta) \ge (c+1)\tau^{-1/\alpha}g_0(\zeta) + (\alpha \tau)^{-1} g_1(\zeta) \, .
\label{eqn_c}
\end{equation} 
for all $\zeta \in [0,1]$ by Lemma \ref{lmm_comp}. By extending $g_0$ and $g_1$ to be zero on $[1,1+\epsilon]$, this inequality is true on $[0,1+\epsilon]$. Now find a constant $d$ such that $v_0(x) = H(x-d) \le \tilde u(x,0)$ for all $x$. This is possible since $\lim_{x\to \infty} \tilde u(x,0) = 1+\epsilon$. By Proposition \ref{comparison_step}, we see that 
\[\tilde u(x,t) \ge F\left( (x-d)t^{-1/\alpha} \right)
\]  
for all $x \in \R, \, t>0$. This is in particular true for $t = \tau$. Now use Proposition \ref{comparison} and \eqref{eqn_c} to infer that
\[\tilde u(x,t) \ge F\left( (x-d+(c+1)t)\tau^{-1/\alpha} \right)
\] 
for all $x \in \R, t \ge \tau$. Therefore for $t \ge \tau$ and $x \ge -ct$, 
\begin{eqnarray*}
\tilde u(x,t) &\ge& F\left( (x-d+(c+1)t)\tau^{-1/\alpha} \right) \\
&\ge& F\left( (-ct-d+(c+1)t)\tau^{-1/\alpha} \right) \\ 
&=&  F\left( (t-d)\tau^{-1/\alpha} \right) \, .
\end{eqnarray*}

As $t \to \infty$, the right hand side goes to 1. Rewriting this in terms of $u$, we see that 
\[
\liminf_{t \to \infty} \inf_{x \ge -ct} \tilde u(x,t) \ge (1+\epsilon)^{-1} \,. \]
Since $\epsilon$ was arbitrary, the desired result follows.

\smallskip
In case of part b), the same argument can be used without changes, appealing to part b) of Lemma \ref{lmm_comp}. 

\smallskip
In case of part c), we have to restrict $\epsilon$ such that $\tilde g > 0$ on $[\frac{1-\beta}{2},1]$, that is, $g > 0$ on $[\frac{1-\beta}{2(1+\epsilon)},1]$. The rest of the proof is again unchanged, using part c) of Lemma \ref{lmm_comp}. \hspace*{3.9in} $\square$ 

\bigskip

\noindent
\emph{Proof of Theorem 3.2.} We replace $u$ with $\tilde u = \frac12u$ and $g$ with $\tilde g$, where $\tilde g(\zeta) = \frac12g(2\zeta)$ for $0 \le \zeta \le \frac12$ and $\tilde g(\zeta) = 0$ for $\zeta \in (\frac12,1]$. Then $\tilde g'(0) = g'(0) < 0$ and 
\[\partial_t \tilde u + A \tilde u = \tilde g(\tilde u) \, .
\]
By Lemma \ref{lmm_comp}, part d), there exist $c > 0$ and $\tau > 0$ such that 
\[
\tilde g(\zeta) \le (c-1)\tau^{-1/\alpha}g_0(\zeta) + (\alpha \tau)^{-1} g_1(\zeta) \, .
\]
Since $\lim_{x \to \infty}\tilde u(x,0) = \frac12$ and $u(x,0) = 0$ for $x<0$, we can find $d>0$ such that $F\left(x+d)\tau^{-1/\alpha} \right) \ge \tilde u(x,0)$
for all $x \in \R$. Using Proposition \ref{comparison}, one sees that
\[ F\left( (x+d+(c-1)t)\tau^{-1/\alpha} \right) \ge \tilde u(x,t)
\] 
for all $x \in \R, t >0$. Therefore for $t >0$ and $x \le -ct$, 
\begin{eqnarray*}
\tilde u(x,t) &\le& F\left( (x+d+(c-1)t)\tau^{-1/\alpha} \right) \\
&\le& F\left( (-ct+d+(c-1)t)\tau^{-1/\alpha} \right) \\ 
&=&  F\left( (-t+d)\tau^{-1/\alpha} \right) \, .
\end{eqnarray*} 
The right hand side tends to $0$ as $t \to \infty$. In terms of $u$, this implies
\[
\limsup_{t \to \infty} \sup_{x \le -ct}  u(x,t) = 0 \,. \]
This concludes the proof. \hspace*{3.in} $\square$ 

\section{Facts About Fractional Reaction-Diffusion Equations}

In this section we summarize some basic theory about \eqref{rde_alpha} that is needed in this note. A broader and deeper discussion may be found in \cite{bkm2}.

\smallskip
We work in the Banach space
\[\cC_{lim} = \{w \in C(\R) \, | \, \lim_{x \to \infty} w(x) \quad \text{and} \quad
\lim_{x \to - \infty} w(x) \quad \text{exist} \}
\]
equipped with the supremum norm $\| \cdot \|$.
Let $A$ be the pseudodifferential operator with symbol \eqref{symbol1} and parameters $\alpha, \, \rho$. As always, let $\rho, \, \beta$ be related by \eqref{rho} and $-1 < \beta < 1$. Solutions of the free equation \eqref{rde_0} with initial data $u(\cdot,0) = \varphi \in \cC_{lim}$ then can be written in terms of the fundamental solution $(x,t) \mapsto t^{-1/\alpha} f_{\alpha \beta}\left( \frac{x-y}{t^{1/\alpha}} \right) $, namely 
\begin{equation}
u(x,t) = \int_\R t^{-1/\alpha} f_{\alpha \beta}\left( \frac{x-y}{t^{1/\alpha}} \right) \varphi(y) \, dy
\label{def_S}
\end{equation}
where $f_{\alpha \beta}$ is a stable probability density function. 

\smallskip
For fixed $\alpha$ and $\rho$ and $\varphi \in \cC_{lim}$, define
\[S(t)\varphi(x) = u(x,t)\]
where $u$ is given by  \eqref{def_S}. This is a positive $C_0$ semigroup on $\cC_{lim}$ and a Feller semigroup on the subspace of functions in $\cC_{lim}$ that vanish at $\pm \infty$. If $\psi$ is a continuous function from $[0,T]$ to $\cC_{lim}$, then solutions of the inhomogeneous equation
\begin{equation}
\label{inhomo}
\partial_t u(x,t)  + A u(x,t) = \psi(x,t), \quad u(\cdot,0) = \varphi
\end{equation}
can be written with the variation-of-constants formula
\begin{equation}
\label{var_const}
u(\cdot,t) = S(t)\varphi + \int_0^t S(t-s) \psi(\cdot,s) \, ds \, .
\end{equation}
A continuous curve $u:[0,T] \to \cC_{lim}$ that satisfies \eqref{var_const} is commonly called a mild solution of \eqref{inhomo}.
Next let $g:[0,\infty) \times \R \to \R$ be locally Lipschitz continuous in both variables and let $\varphi \in \cC_{lim}$. Then the equation $\partial_t u(x,t) + A u(x,t) = g(t,u(x,t))$ (for which \eqref{rde_alpha} is a special case) has a unique mild solution $u \in C\left([0,T),\cC_{lim}\right)$, where $0 < T \le \infty$ is maximal. Either $T= \infty$, or $\|u(\cdot,t)\| \to \infty$ as $t \uparrow T$. The solution can be obtained as the locally in time uniform limit of the iteration scheme
\[u_{n+1}(\cdot,t) = S(t)\varphi + \int_0^t S(t-s) g(s,u_n(\cdot,s)) \, ds \quad (n = 0, \, 1, \dots)
\]
with $u_0$ arbitrary, e.g. $u_0(\cdot,t) = S(t) \varphi$.
It is possible to set up a more general solution theory, but this is not needed for the purposes of this paper.

\smallskip
Solutions of \eqref{rde_alpha} satisfy comparison theorems. Results of this type are true for all Feller semigroup. A systematic study of such semigroups and their generators was carried out in \cite{bcp}, following the seminal work on this topic in \cite{waldi}. For the sake of completeness, a comparison result is stated here, and its proof is sketched.

\begin{prpstn}
\label{comparison}
Let $u, \, v \in C\left([0,T], \cC_{lim} \right)$ be mild solutions of the equations
\[\partial_t u + Au = g(u), \quad \partial_t v + Av = h(v)
\]
where $g, \, h :\R \to \R$ are locally Lipschitz continuous. If
\[g(\zeta) \le h(\zeta) \quad \forall \zeta \in \R
\]
and
\[u(\cdot,0) \le v(\cdot,0) \]
then
\[u(x,t) \le v(x,t) \quad \forall (x,t) \in \R \times [0,T] \, .
\]
\end{prpstn}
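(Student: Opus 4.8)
The plan is to reduce Proposition~\ref{comparison} to the case of \emph{monotone} reaction terms, for which it follows at once from positivity of the semigroup together with the Picard iteration recalled in this section. First I would localize. Since $u,v\in C([0,T],\cC_{lim})$, all values $u(x,t)$ and $v(x,t)$ with $x\in\R$, $0\le t\le T$, lie in a compact interval $K=[a,b]$, on which $g,h$ are Lipschitz; let $L$ be a common Lipschitz constant. Extending $g$ and $h$ by constants beyond the endpoints of $K$, I may assume without loss of generality that $g,h$ are globally Lipschitz with constant $L$, that $g\le h$ on all of $\R$, and --- crucially --- that $G(\zeta):=g(\zeta)+L\zeta$ and $H(\zeta):=h(\zeta)+L\zeta$ are non-decreasing on $\R$. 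By uniqueness of mild solutions this alters neither $u$ nor $v$, since both take values in $K$.

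Next I would absorb the Lipschitz defect into the linear part. Put $T(t):=e^{-Lt}S(t)$, again a positive $C_0$ contraction semigroup on $\cC_{lim}$. A standard computation (applying the variation-of-constants formula to $e^{Lt}u$, which satisfies $\partial_t(e^{Lt}u)+A(e^{Lt}u)=e^{Lt}G(u)$) shows that $u$ is equivalently characterised by the integral equation
\[
u(\cdot,t)=T(t)\,u(\cdot,0)+\int_0^t T(t-s)\,G(u(\cdot,s))\,ds ,
\]
and likewise $v(\cdot,t)=T(t)v(\cdot,0)+\int_0^t T(t-s)H(v(\cdot,s))\,ds$. The existence and iteration arguments of this section apply to these equations as well, so $u$ and $v$ are the locally uniform limits of the Picard iterates $u_0(\cdot,t)=T(t)u(\cdot,0)$, $u_{n+1}(\cdot,t)=T(t)u(\cdot,0)+\int_0^t T(t-s)G(u_n(\cdot,s))\,ds$, and $v_n$ defined in the same way with $H$ and $v(\cdot,0)$. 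Then I would induct on $n$: positivity of $T(t)$ together with $u(\cdot,0)\le v(\cdot,0)$ gives $u_0\le v_0$; and if $u_n(\cdot,s)\le v_n(\cdot,s)$ for all $s\in[0,T]$, then $G(u_n(\cdot,s))\le G(v_n(\cdot,s))\le H(v_n(\cdot,s))$ pointwise by monotonicity of $G$ and $G\le H$, so applying the positive operators $T(t-s)$, integrating in $s$, and adding the inequality $T(t)u(\cdot,0)\le T(t)v(\cdot,0)$ yields $u_{n+1}(\cdot,t)\le v_{n+1}(\cdot,t)$. Letting $n\to\infty$ gives $u(x,t)\le v(x,t)$ for all $(x,t)\in\R\times[0,T]$.

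The main difficulty here is conceptual rather than computational: a direct Gronwall estimate for $w=v-u$ does not close, because at points where $u>v$ the difference $h(v)-g(u)$ is bounded below only by $-L|w|$, which reinstates exactly the wrong-sign quantity one is trying to control. Rewriting the equation in terms of $e^{Lt}u$ --- so that the reaction terms $G,H$ become monotone and the iteration becomes order-preserving --- is what removes this obstruction and is the real content of the argument; the remaining points (harmlessness of modifying $g,h$ off the range of the solutions, and transfer of the solution theory to the semigroup $T(t)$) are routine.
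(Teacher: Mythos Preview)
Your proof is correct and is essentially the paper's own argument: both add a multiple of $\zeta$ to $g$ and $h$ to make the reaction terms non-decreasing, then feed the resulting monotone iteration through the positive semigroup and pass to the limit. The only cosmetic difference is that the paper absorbs the factor $e^{Lt}$ into the unknown (working with $U=e^{\lambda t}u$ and a time-dependent nonlinearity $\tilde g(t,\zeta)=\lambda\zeta+e^{\lambda t}g(e^{-\lambda t}\zeta)$, iterated against $S(t)$), whereas you absorb it into the semigroup (working with $T(t)=e^{-Lt}S(t)$ and the time-independent $G$); the two iterations are related by $U_n=e^{Lt}u_n$ and the induction steps coincide line for line.
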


\begin{proof}
Let $M = \max_{[0,T]} \left( \|u(\cdot,t)\| + \|v(\cdot,t)\| + 1 \right)$. Let $\lambda > |g'(\zeta)| + h'(\zeta)|$ for all $|\zeta| \le M$. Without loss of generality we may assume that $g$ and $h$ are constant outside $[-M,M]$. Set
\[U(x,t) = e^{\lambda t} u(x,t), \quad V(x,t) = e^{\lambda t}v(x,t)
\]
and observe that $U$ and $V$ satisfy
\begin{eqnarray*}
\partial_t U + AU &=& \tilde g(t,U) \\
\partial_t V  + AV &=& \tilde h(t,V)
\end{eqnarray*}
with $\tilde g(t,\zeta) = \lambda \zeta + e^{\lambda t} g\left( e^{-\lambda t} \zeta \right)$ and $\tilde h(t, \zeta)$ defined similarly. Clearly, $\tilde g(t,\zeta) \le \tilde h(t,\zeta)$ for all $\zeta$. The function $\tilde g$ is non-decreasing in its second argument, since for almost all $\zeta$
\[
\partial_\zeta \tilde g(t,\zeta) = \lambda + g'(e^{-\lambda \zeta}) \ge 0 \, .
\]
Consider the iteration scheme
\[
U_{n+1}(\cdot,t) = S(t)u(\cdot,0) + \int_0^t S(t-s) \tilde g(s,U_n(\cdot,s)) \, ds
\]
and similarly for $V_n$ and $\tilde h$. The scheme for the $U_n$ converges to the limit $U$, and the scheme for the $V_n$ converges to the limit $V$.

\smallskip
We now employ a standard induction argument to show that $U_n \le V_n$ on $\R \times [0,T]$ for all $n$. This implies that $U \le V$ and therefore also $u \le v$ on $\R \times [0,T]$. Let $U_0(\cdot,t) = S(t)u(\cdot,0)$ and $V_0(\cdot,t) = S(t)v(\cdot,0)$, then $U_0 \le V_0$ on $\R \times [0,T]$ since $S$ is a positive semigroup and $u(\cdot,0) \le v(\cdot,0)$.  Suppose $U_n \le V_n$ on $\R \times [0,T]$, then
\begin{eqnarray*}
U_{n+1}(\cdot,t) &=& S(t)u(\cdot,0) + \int_0^t S(t-s) \tilde g(s,U_n(\cdot,s)) ds \\
&\le& S(t)v(\cdot,0) + \int_0^t S(t-s) \tilde g(s,U_n(\cdot,s)) ds \\
&\le& S(t)v(\cdot,0) + \int_0^t S(t-s) \tilde g(s,V_n(\cdot,s)) ds \\
&\le& S(t)v(\cdot,0) + \int_0^t S(t-s) \tilde h(s,V_n(\cdot,s)) ds \\
&=& V_{n+1}(\cdot,t)
\end{eqnarray*}
which completes the induction step. This proves the proposition.
\end{proof}

\begin{crllr}
Consider a mild solution $u \in C\left([0,T),\cC_{lim} \right)$ of \eqref{rde_alpha} and assume that $g$ is locally Lipschitz continuous. If $g(\gamma) \ge 0$ for some $\gamma$ and $u(\cdot,0) \ge \gamma$, then $u(\cdot,t) \ge \gamma$ for all $t$. If $g(\gamma) \ge 0 \ge g(\delta) $ for some $\gamma<\delta$ and $\gamma \le u(\cdot,0) \le \delta$, then $\gamma \le u(\cdot,t) \le \delta$ for all $t$, and the solution can be continued to $\R \times [0,\infty)$.
\end{crllr}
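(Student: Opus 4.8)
The plan is to obtain both statements directly from the comparison principle, Proposition~\ref{comparison}, by comparing $u$ with suitable constant-in-$(x,t)$ solutions. The key preliminary observation is that for any real constant $c$ the constant function $(x,t)\mapsto c$ is a mild solution of the free equation $\partial_t v + Av = 0$: the semigroup $S(t)$ from section~4 acts by convolution with the probability density $t^{-1/\alpha}f_{\alpha\beta}(\cdot\,t^{-1/\alpha})$, hence $S(t)c = c$, and the variation-of-constants formula \eqref{var_const} is then satisfied trivially. Equivalently, the symbol \eqref{symbol1} vanishes at $\lambda = 0$.

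For the first assertion I would introduce the locally Lipschitz function $h(\zeta) = g(\zeta) - g(\gamma)$. Since $g(\gamma)\ge 0$, we have $h \le g$ on all of $\R$, and since $h(\gamma) = 0$, the constant function $v\equiv\gamma$ is a mild solution of $\partial_t v + Av = h(v)$. As $v(\cdot,0) = \gamma \le u(\cdot,0)$, Proposition~\ref{comparison}, applied on each interval $[0,T']$ with $T' < T$, yields $\gamma \le u(\cdot,t)$ for all $t\in[0,T)$. For the second assertion I would additionally set $\tilde h(\zeta) = g(\zeta) - g(\delta)$; since $g(\delta)\le 0$ we get $\tilde h \ge g$ and $\tilde h(\delta) = 0$, so the constant $w\equiv\delta$ is a mild solution of $\partial_t w + Aw = \tilde h(w)$ with $w(\cdot,0) = \delta \ge u(\cdot,0)$, and Proposition~\ref{comparison} gives $u(\cdot,t) \le \delta$. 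Combining, $\gamma \le u(\cdot,t) \le \delta$ on $[0,T)$.

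In the two-sided case this bound gives $\|u(\cdot,t)\| \le \max(|\gamma|,|\delta|)$ for all $t\in[0,T)$, which rules out the blow-up alternative recorded in section~4; hence $T = \infty$ and $u$ extends to $\R\times[0,\infty)$, the same bounds persisting there by the identical argument. The only point needing attention — and the main, though minor, obstacle — is the bookkeeping around Proposition~\ref{comparison}: its statement is for solutions on a closed interval $[0,T]$ while our $u$ lives on the half-open maximal interval $[0,T)$, so one must run the comparison on $[0,T']$ for arbitrary $T'<T$ and then let $T'\uparrow T$; and one should check that $h$ and $\tilde h$ are genuinely locally Lipschitz on $\R$ (they differ from $g$ by constants) and that the constant functions belong to $\cC_{lim}$, both of which are immediate.
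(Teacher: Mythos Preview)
Your proposal is correct and follows the same approach as the paper: compare $u$ with the constant functions $\gamma$ and $\delta$, viewed as mild solutions of the equation with suitably chosen right-hand sides, and then invoke Proposition~\ref{comparison} together with the blow-up alternative. Your write-up is in fact more careful than the paper's one-line sketch, since you specify the auxiliary functions $h$ and $\tilde h$ explicitly (the paper simply says the constants solve the equation ``with right hand sides $0$'') and you attend to the $[0,T']$ versus $[0,T)$ bookkeeping.
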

The proof consists in observing that the constant functions $v(x,t) = \gamma$ and $w(x,t) = \delta$ solve \eqref{rde_alpha} with right hand sides $0$ and therefore must be pointwise bounds for the solution, by Proposition \ref{comparison}. If the solution remains bounded between two constants, then its supremum norm remains bounded and it can be continued to $\R\times [0,\infty)$.

\smallskip
Also required is a comparison result for solutions whose initial data are step functions. Since such initial data are not in $\cC_{lim}$, a separate argument is required.

\begin{prpstn}
\label{comparison_step}
Let $u \in C\left([0,T],\cC_{lim}\right)$ be a mild solution of \eqref{rde_alpha}, with locally Lipschitz continuous $g$. Assume that
\[u(x,0) \ge v_0(x) = a_0 +\sum_{j=1}^N a_j H(x-c_j) \quad \forall x \in \R
\]
where $a_j \in \R, \, c_1 < c_2 \dots < c_N$, and $H$ is the Heaviside function. Let $\gamma = \min_\R v_0(x)$ and $\delta = \max_\R v_0(x)$. Assume also that $g \ge 0$ on $[\gamma,\delta]$. Then
\begin{equation} \label{est_2}
u(x,t) \ge a_0 + \sum_{j=1}^N a_j F_{\alpha \beta}\left( \frac{x-c_j}{t^{1/\alpha}} \right) \quad \forall x \in \R, 0 < t \le T
\end{equation}
where $F_{\alpha\beta}$ is the cumulative distribution function of the associated stable distribution.
\end{prpstn}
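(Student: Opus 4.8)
The plan is to identify the right-hand side of \eqref{est_2} as the value at time $t$ of the solution of the free equation \eqref{rde_0} with the step function $v_0$ as initial datum, and then to compare $u$ with this free solution. Using the representation \eqref{def_S} and the substitution $s=(x-y)t^{-1/\alpha}$, one checks directly that
\[
\int_{\R} t^{-1/\alpha} f_{\alpha\beta}\!\left(\frac{x-y}{t^{1/\alpha}}\right) v_0(y)\,dy
= a_0 + \sum_{j=1}^{N} a_j F_{\alpha\beta}\!\left(\frac{x-c_j}{t^{1/\alpha}}\right) =: v(x,t),
\]
using that $f_{\alpha\beta}$ is a probability density, so that the constant $a_0$ is reproduced and $\int_{c_j}^{\infty} t^{-1/\alpha} f_{\alpha\beta}((x-y)t^{-1/\alpha})\,dy = F_{\alpha\beta}((x-c_j)t^{-1/\alpha})$. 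Because $f_{\alpha\beta}\ge 0$ and $\gamma\le v_0\le\delta$, the function $v(\cdot,t)$ takes values in $[\gamma,\delta]$ for every $t>0$, and it lies in $\cC_{lim}$ for $t>0$; but it is not continuous at $t=0$, which is why Proposition \ref{comparison} cannot be applied to $v$ directly.

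To get around this I would approximate $v_0$ from below. Pick continuous functions $v_0^{(k)}\in\cC_{lim}$ with $\gamma\le v_0^{(k)}\le\delta$, $v_0^{(k)}\le v_0$ on all of $\R$, and $v_0^{(k)}\to v_0$ pointwise except possibly at the jump points $c_1,\dots,c_N$ --- for instance the Moreau--Yosida regularizations $v_0^{(k)}(x)=\inf_{y\in\R}\bigl(v_0(y)+k|x-y|\bigr)$, which are Lipschitz, constant for $|x|$ large, and increase to $v_0$ at every point of lower semicontinuity. Set $v^{(k)}(\cdot,t)=S(t)v_0^{(k)}$. Since $S(t)$ is a positive, order-preserving operator that reproduces constants, $\gamma\le v^{(k)}(\cdot,t)\le\delta$ for every $t$.

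The key step is a comparison with a modified nonlinearity. Since $g$ is locally Lipschitz and $g\ge 0$ on $[\gamma,\delta]$, there is a locally Lipschitz function $\hat g:\R\to\R$ with $\hat g\equiv 0$ on $[\gamma,\delta]$ and $\hat g\le g$ on $\R$; one may take $\hat g(\zeta)=\min\bigl(0,\min_{s\in[\delta,\zeta]}g(s)\bigr)$ for $\zeta>\delta$ and $\hat g(\zeta)=\min\bigl(0,\min_{s\in[\zeta,\gamma]}g(s)\bigr)$ for $\zeta<\gamma$, continuity at the endpoints following from $g(\gamma),g(\delta)\ge 0$. Because $v^{(k)}(x,t)\in[\gamma,\delta]$ for all $(x,t)$, the term $\int_0^t S(t-s)\hat g(v^{(k)}(\cdot,s))\,ds$ vanishes, so $v^{(k)}\in C([0,T],\cC_{lim})$ is in fact a mild solution of $\partial_t w+Aw=\hat g(w)$ with initial datum $v_0^{(k)}$. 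Now $\hat g\le g$ and $v^{(k)}(\cdot,0)=v_0^{(k)}\le v_0\le u(\cdot,0)$, so Proposition \ref{comparison} gives $v^{(k)}(x,t)\le u(x,t)$ for all $(x,t)\in\R\times[0,T]$. Letting $k\to\infty$ and using dominated convergence in \eqref{def_S} shows $v^{(k)}(x,t)\to v(x,t)$ for every $x$ and every $t>0$, hence $v(x,t)\le u(x,t)$, which is \eqref{est_2}.

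I expect the main obstacle to be exactly this treatment of $g$ away from $[\gamma,\delta]$: since the hypotheses provide no a priori upper bound $u\le\delta$, one cannot simply use $g\ge 0$ along the solution, and the point of the modification $\hat g$ (together with checking that $v^{(k)}$ is a genuine mild solution of the modified equation, not merely a subsolution) is to bring the situation into the scope of Proposition \ref{comparison}. The remaining ingredients --- the explicit integral identity, the construction and convergence of the $v_0^{(k)}$, and the limit passage --- are routine. In the cases where this proposition is actually invoked one has $g\ge 0$ on the whole range of $u$, and then the argument simplifies: the variation-of-constants formula and positivity of $S$ give $u(\cdot,t)\ge S(t)u(\cdot,0)\ge S(t)v_0=v(\cdot,t)$ at once.
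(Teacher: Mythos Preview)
Your argument is correct and follows essentially the same strategy as the paper: approximate the step datum from below by continuous functions in $\cC_{lim}$, observe that the corresponding free evolutions stay in $[\gamma,\delta]$ and hence are mild solutions for a modified nonlinearity $\le g$ vanishing on $[\gamma,\delta]$, apply Proposition~\ref{comparison}, and pass to the limit. The only cosmetic differences are that the paper uses a shifted average minus $\epsilon$ in place of your Moreau--Yosida regularization, and that it first invokes $u\ge\gamma$ (from the preceding corollary) to reduce to $g\ge 0$ on $(-\infty,\delta]$ before choosing the auxiliary right-hand side, whereas you build $\hat g$ in one step.
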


\begin{proof}
We know that $u(x,t) \ge \gamma$ and thus may assume that $g(\zeta) \ge 0$ also for $\zeta < \gamma$. For arbitrary $\epsilon \sigma > 0$, we set
\[
v_{\epsilon\sigma}(x) = \frac{1}{\sigma} \int_0^\sigma v_0(x-z) dz - \epsilon \, .
\]
Then $v_{\epsilon \sigma}$ is piecewise linear and constant outside the interval $[c_1,c_N+\sigma]$; in particular, $v_{\epsilon \sigma} \in \cC_{lim}$. Solving \eqref{rde_0} with initial data $v_{\epsilon,\sigma}$ gives the solution
\[V_{\epsilon \sigma}(x,t) = \frac{1}{\sigma} \int_0^\sigma
\sum_{j=1}^N a_j F_{\alpha \beta}\left( \frac{x-z-c_j}{t^{1/\alpha}} \right) +a_0 - \epsilon \, .
\]
Given $\epsilon > 0$, it is possible to find $\sigma > 0$ such that $v_{\epsilon \sigma} < u_0(x)$ on $\R$, since $u_0$ is uniformly continuous. Clearly, $\gamma-\epsilon \le V_{\epsilon\sigma} \le \delta$. We may therefore view $V_{\epsilon \sigma}$ as a solution of \eqref{rde_alpha} with a right hand side $h$ that satisfies $h(\zeta) = 0\le g(zeta)$ for $\zeta \le \delta$ and $h(\zeta) \le g(\zeta)$ also for $\zeta > \delta)$.  Then by Proposition \ref{comparison}
\[u(x,t) \ge V_{\epsilon, \sigma}(x,t) \quad \forall x \in \R, \, 0 < t \le T \, .
\]
Send $\delta$ to $0$, then since $F_{\alpha \beta}$ is uniformly continuous, \eqref{est_2} is obtained with $a_0$ replaced by $a_0 - \epsilon$ on the right hand side. Now send $\epsilon$ to $0$ and \eqref{est_2} follows.
\end{proof}

\end{document}